\documentclass[11pt]{article} 

\usepackage{amsmath}
\usepackage{amssymb}
\usepackage{color}
\usepackage{array}
\usepackage[T1]{fontenc}
\usepackage[latin1]{inputenc}
\usepackage{datetime}
\usepackage[linkcolor=black,urlcolor=black,colorlinks=true]{hyperref}

\textheight = 22 cm
\textwidth = 15.5 cm
\hoffset = -1.5 cm
\voffset = -1 cm
\parskip = 3mm
\parindent 5mm

\def\r{\rightarrow}

\usepackage{enumerate}

\newcommand{\fdem}{\hspace*{\fill}~$\Box$\par\endtrivlist\unskip}

\renewcommand{\L}{\mathbb{L}}

\newcommand{\N}{\mathbb{N}}     

\newcommand{\R}{\mathbb{R}}     
     
\newcommand{\C}{\mathbb{C}} 
 
\newcommand{\X}{\mathbb{X}}

\renewcommand{\r}{\mathop{\rightarrow}}

\newcommand{\cB}{\mbox{$\cal B$}}

\newtheorem{theo}{Theorem}
\newtheorem{pro}{Proposition}
\newenvironment{proof}[1]{\textit{Proof#1.\,}}{\fdem}
\newtheorem{lem}{Lemma}
\newtheorem{rem}{Remark}
\newtheorem{ex}{Example}
\newtheorem{cor}{Corollary}

\title{Additional material on bounds of $\ell^2$-spectral gap for discrete Markov chains with band transition matrices}
%{Computable bounds of spectral gap for discrete Markov chains with band transition matrices}

\author{Loïc HERV\'E, and James LEDOUX \footnote{INSA de Rennes, IRMAR, F-35042, France; CNRS, UMR 6625, Rennes, F-35708, France; Université Européenne de Bretagne, France. \{Loic.Herve,James.Ledoux\}@insa-rennes.fr}
}

\begin{document}

\date{{\small version du \today \ -- \currenttime}}

\maketitle
\begin{abstract}
We analyse the $\ell^2(\pi)$-convergence rate of irreducible and aperiodic Markov chains with $N$-band transition probability matrix $P$ and with invariant distribution $\pi$. This analysis is heavily based on: first the study of the essential spectral radius $r_{ess}(P_{|\ell^2(\pi)})$ of $P_{|\ell^2(\pi)}$ derived from Hennion's quasi-compactness criteria; second  the connection between  the spectral gap property (SG$_2$) of $P$ on $\ell^2(\pi)$ and the $V$-geometric ergodicity of $P$. Specifically, (SG$_2$)  is shown to hold under the condition   
\begin{equation*} %\label{ass-beta}
\alpha_0 := \sum_{{m}=-N}^N \limsup_{i\r+\infty} \sqrt{P(i,i+{m})\, P^*(i+{m},i)}\ <\, 1. %\tag{\textbf{AS2}}
\end{equation*}  
Moreover $r_{ess}(P_{|\ell^2(\pi)}) \leq \alpha_0$. 
Simple conditions on asymptotic properties of $P$ and of its invariant probability distribution $\pi$ to ensure that $\alpha_0<1$ are given. In particular this allows us to obtain estimates of the $\ell^2(\pi)$-geometric convergence rate of random walks with bounded increments. The specific case of reversible $P$ is also addressed. Numerical bounds on the convergence rate can be provided via a truncation procedure. This is illustrated on the Metropolis-Hastings algorithm.   
\end{abstract}
\begin{center}
AMS subject classification : 60J10; 47B07

Keywords : Rate of convergence, $\ell^2$-spectral gap,  $V$-geometric ergodicity, Essential spectral radius,  Metropolis-Hastings algorithm.
\end{center}

%===========================
%==========================
\section{Introduction}
%===========================
%==========================
%\vspace*{-5mm}
Let $P:=(P(i,j))_{(i,j)\in\X^2}$ be a Markov kernel on a countable state space $\X$. For the sake of simplicity we suppose that $\X:=\N$.  Throughout the paper we assume that $P$ is irreducible and aperiodic, that $P$ has a unique invariant probability measure denoted by $\pi:=(\pi(i))_{i\in\N}$ (observe that $\forall i\in\N,\ \pi(i) >0$ from irreducibility), and finally that
\begin{equation} \label{ass-voisin}
\exists i_0\in\N,\ \exists N\in\N^*,\ \forall i\geq i_0\ :\quad |i-j| > N\ \Longrightarrow\ P(i,j)=0. \tag{\textbf{AS1}}
\end{equation}
We denote by $(\ell^2(\pi),\|\cdot\|_2)$ the usual Hilbert space of sequences $(f(i))_{i\in\N}\in\C^{\N}$ such that $\|f\|_2 := [\, \sum_{i\geq0} |f(i)|^2\, \pi(i)\, ]^{1/2} < \infty$. It is well-known that $P$ 
defines a linear contraction on $\ell^2(\pi)$, and that its adjoint operator $P^*$ on $\ell^2(\pi)$ is defined by $P^*(i,j) := \pi(j)\, P(j,i)/\pi(i)$. The kernel $P$ is said to have the spectral gap property on  $\ell^2(\pi)$  at rate $\rho\in(0,1)$ if there exists some positive constants $\rho\in(0,1)$ and $C\in(0,+\infty)$ such that 
\begin{equation} \label{ineg-gap-gene}
\forall n\geq1, \forall f\in\ell^2(\pi),\quad \|P^nf - \Pi f\|_2 \leq C\, \rho^n\, \|f\|_2 \quad \text{with} \quad \Pi f := \pi(f) 1_{\N}, \tag{\textbf{SG$_2$}}
\end{equation}
where $\pi(f):=\sum_{i\geq0} f(i)\, \pi(i)$. A relevant and standard issue is to compute the value (or to find an upper bound) of 
%\vspace*{-3mm}
\begin{equation} \label{def-varho-gene}
\varrho_2 := \inf\{\rho\in(0,1) : \text{(\ref{ineg-gap-gene}) holds true}\}.
\end{equation}

In this work we use the quasi-compactness criteria of \cite{Hen93} to study (\ref{ineg-gap-gene}) and to estimate $\varrho_2$. In Section~\ref{sec-bounded-tp} it is proved that (\ref{ineg-gap-gene}) holds when 
%\vspace*{-2mm}
\begin{equation} \label{ass-beta}
\alpha_0 := \sum_{{m}=-N}^N \limsup_{i\r+\infty} \sqrt{P(i,i+{m})\, P^*(i+{m},i)}\ <\, 1. \tag{\textbf{AS2}}
\end{equation}
Moreover $r_{ess}(P_{|\ell^2(\pi)}) \leq \alpha_0$. The main argument to obtain this result is the Doeblin-Fortet inequality in Lemma~\ref{lem-D-F-gene}. We refer to \cite{Hen93} for the definition of the essential spectral radius $r_{ess}(T)$ (related to quasi-compactness) of a bounded linear operator $T$ on a Banach space.  In Section~\ref{sec-stab-expo}, under the following assumptions 
\begin{gather} 
 \quad \forall {m}=-N,\ldots,N,\quad P(i,i+{m}) \xrightarrow[i\r +\infty]{} a_{m}\in[0,1]. \label{cond-lim-intro} \tag{\textbf{AS3}}\\
\frac{\pi(i+1)}{\pi(i)} \xrightarrow[i\r +\infty]{} \tau \in[0,1) \label{pi-tail} \tag{\textbf{AS4}}\\
\sum_{k=-N}^{N} k\, a_{k}\, < 0, \label{MoySautBorne} \tag{\textbf{NERI}}
\end{gather}
we establish that (\ref{ass-beta}) holds (hence (\ref{ineg-gap-gene})) and that $\alpha_0$ can be explicitly computed in function of $\tau$ and the $a_{m}$'s. 
Observe that (\ref{MoySautBorne}) means that the expectation of the asymptotic random increments is negative. Moreover, using the inequality  $r_{ess}(P_{|\ell^2(\pi)}) \leq \alpha_0$, Property~(\ref{ineg-gap-gene}) is proved to be connected to the so-called $V$-geometric ergodicity of $P$ for $V:= (\pi(n)^{-1/2})_{n\in\N}$, which corresponds to the spectral gap property on the usual weighted-supremum space $\cB_V$ associated with $V$. In particular, denoting the minimal
%best 
$V$-geometrical ergodic rate by $\varrho_V$, it is proved that, either $\varrho_2$ and $\varrho_V$ are both less than $\alpha_0$, or $\varrho_2=\varrho_V$. As a result, an accurate bound of $\varrho_2$ is obtained for random walks (RW) with i.d.~bounded increments using the results of \cite{HerLedJAP14}. In the reversible case (Section~\ref{sec-reversible}) the previous results hold under Assumptions~(\ref{cond-lim-intro}) and (\ref{pi-tail}) provided that $a_m \neq a_{-m}$ for at least one $m$. A first illustration to Birth-and-Death Markov chains (BDMC) is proposed in Subsection~\ref{subsec-ex-bdmc}. The reversible case naturally contains the Markov kernels associated with the Metropolis-Hastings (M-H) Algorithm. In Subsection~\ref{sec-MCMC} we observe that, if the target distribution $\pi$ and the proposal kernel $Q:=(Q(i,j))_{(i,j)\in\N^2}$ satisfy (\ref{ass-voisin}), (\ref{cond-lim-intro}) and (\ref{pi-tail}), then so is the associated reversible M-H kernel $P$, which then satisfies (\ref{ineg-gap-gene}). 

Estimating $\varrho_2$ is a difficult but relevant issue. This question is investigated in Section~\ref{cas-gap-connu} where an accurate estimation of $\varrho_2$ is obtained by using the above mentioned link between $\varrho_2$ and $\varrho_V$ and by applying the truncation procedure in \cite{HerLed14a}. 
Numerical applications to discrete MCMC are presented at the end of Section~\ref{cas-gap-connu}. Bounding $\varrho_2$ in the reversible case is of special interest  since (\ref{ineg-gap-gene}) holds in this case with $C=1$ and $\rho=\varrho_2$.

The spectral gap property for Markov processes has been widely investigated in the discrete and continuous-time cases (e.g.~see \cite{Ros71} for discrete-time, \cite{Che04} for continuous-time, and \cite{ConGui13} for dynamical systems). We point out that there exist different definitions of the spectral gap property according that we are concerned with discrete or continuous-time case. A simple and concise presentation about this difference is proposed in \cite{Yue00,MaoSon13}. The focus of our paper is on the discrete time case. In the reversible case, the equivalence between the geometrical ergodicity and (\ref{ineg-gap-gene}) is proved in \cite{RobRos97} and  Inequality $\varrho_2 \leq \varrho_{V}$ is obtained in \cite[Th.6.1.]{Bax05}.  This equivalence fails in the non-reversible case (see \cite{KonMey12}). The link between $\varrho_2$ and $\varrho_{V}$ stated in our Proposition~\ref{pro-RW-SG} is obtained with no reversibility condition. The works \cite{StaWub11,Wu12} provide formulae for $\varrho_2$ in terms of isoperimetric constants which are related to $P$ in reversible case and to $P$ and $P^*$ in non-reversible case. However, to the best of our knowledge, no explicit value (or upper bounds) of $\varrho_2$ can be derived from these formulae for discrete Markov chains with band transition matrices. For instance (\ref{ineg-gap-gene}) is proved to hold in \cite{Wu12} for RW with i.d.~bounded increments satisfying (\ref{MoySautBorne}) and a weak reversibility condition, but no explicit bounds for $\varrho_2$ are derived from isoperimetric constants. For such RWs, our method gives the exact value of $\varrho_2$ with no reversibility assumption (see Examples~\ref{ex-rw-gene} and \ref{ex-g-2-d-1}). Concerning BDMCs, recall that the decay parameter of $P$, which equals to $\varrho_2$ for these models (see \cite{DooSch95}), is only known for specific instances of BDMC (see Remark~\ref{l2-spectral-gap} for details). 
In the context of discrete MCMC, no satisfactory bound for $\varrho_2$ was known to the best of our knowledge, except for special instances as the simulation of a geometric distribution corresponding to a simple BDMC (see \cite[Ex.~2]{MenTwe96}). The bounds for $\varrho_2$ obtained in Section~\ref{cas-gap-connu} for discrete MCMC via truncation procedure applies to any target distribution $\pi$ satisfying (\ref{pi-tail}) when  the proposal kernel $Q$ satisfies (\ref{ass-voisin}) and (\ref{cond-lim-intro}). The accuracy of our estimation in Section~\ref{cas-gap-connu} depends on the order $k$ of the used truncated finite matrix $P_k$ (see Tables~\ref{Table_tau=3/5} and \ref{Table_Poisson}). Our explicit bound $r_{ess}(P_{|\ell^2(\pi)}) \leq \alpha_0$ in Theorem~\ref{theo-spec-gap-gene} for discrete Markov chains with band transition matrices is the preliminary key results in this work. Recall that $r_{ess}(P_{|\ell^2(\pi)})$ is a natural lower bound of $\varrho_2$ (see \cite[Prop.~2.1]{HerLedJAP14} with $\ell^2(\pi)$ in place of $\cB_V$).  The essential spectral radius of Markov operators on a $\L^2$-type space is investigated for discrete-time Markov chains with general state space in \cite{Wu04} (see also \cite{GonWu06}), but no explicit bound for $r_{ess}(P_{|\ell^2(\pi)})$ can be derived a priori from these theoretical results for discrete Markov chains with band transition matrices, except Inequality $r_{ess}(P_{|\ell^2(\pi)}) \leq r_{ess}(P_{|{\cal B}_V})$ in the reversible case (see \cite[Th.~5.5.]{Wu04}). 
Finally recall that, for any Markov chain $(X_n)_{n\in\N}$ with transition kernel $P$ satisfying {(\ref{ineg-gap-gene})}, the Berry-Esseen theorem and the first-order Edgeworth expansion apply to additive functional of $(X_n)_{n\in\N}$ under the expected third-order moment condition, see \cite{FerHerLed10}.

%\vspace*{-5mm}
%===========================
%==========================
\section{(\ref{ineg-gap-gene}) under Assumption~(\ref{ass-voisin}) on $P$} \label{sec-bounded-tp}
%===========================
%==========================
%
%\vspace*{-2mm}
\begin{theo} \label{theo-spec-gap-gene}
If Condition~\emph{(\ref{ass-beta})} holds, then $P$ satisfies \emph{(\ref{ineg-gap-gene})}. Moreover $r_{ess}(P_{|\ell^2(\pi)}) \leq \alpha_0$. 
\end{theo}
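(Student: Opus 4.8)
The plan is to derive quasi-compactness of $P$ on $\ell^2(\pi)$ from Hennion's criterion \cite{Hen93}, which consumes precisely a Doeblin--Fortet (Lasota--Yorke) inequality of the type
\[
\|P^nf\|_2 \le c\,(\alpha_0+\varepsilon)^n\,\|f\|_2 + C_n\,\|f\|'_\varepsilon ,
\]
where $\|\cdot\|'_\varepsilon$ is a seminorm that only ``sees'' finitely many coordinates, so that the $\|\cdot\|_2$-unit ball is relatively compact for $\|\cdot\|'_\varepsilon$. This is the content of Lemma~\ref{lem-D-F-gene}, and once it holds Hennion's theorem yields $r_{ess}(P_{|\ell^2(\pi)})\le \alpha_0+\varepsilon$ for every $\varepsilon>0$, hence $r_{ess}(P_{|\ell^2(\pi)})\le\alpha_0$. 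Since $\alpha_0<1$ while the spectral radius equals $1$ (because $1_{\N}\in\ell^2(\pi)$, $P1_{\N}=1_{\N}$ and $\|P\|_2\le1$), $P$ is quasi-compact with a spectral gap.

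The arithmetic heart is the constant $\alpha_0$. First I would pass to the unitarily equivalent operator $\tilde P$ on the standard space $\ell^2(\N)$ via $(Uf)(i)=\sqrt{\pi(i)}\,f(i)$, whose entries are $\tilde P(i,j)=\sqrt{\pi(i)/\pi(j)}\,P(i,j)\ge0$. The key identity is
\[
\sqrt{P(i,i+m)\,P^*(i+m,i)}=\sqrt{\pi(i)/\pi(i+m)}\,P(i,i+m)=\tilde P(i,i+m),
\]
so that $\alpha_0=\sum_{m=-N}^{N}\limsup_{i\to+\infty}\tilde P(i,i+m)$ is simply the sum over the $2N+1$ diagonals of the asymptotic diagonal values of $\tilde P$. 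Splitting $\tilde P=\Pi_k\tilde P+\Pi_k^{\perp}\tilde P\,\Pi_k+\Pi_k^{\perp}\tilde P\,\Pi_k^{\perp}$, with $\Pi_k$ the coordinate projection onto $\{0,\dots,k-1\}$, the first two blocks have rank $\le2k$ (hence are compact), and the band structure lets me bound the tail $\Pi_k^\perp\tilde P\,\Pi_k^\perp$ by the symmetric Schur test: for $k$ large its row sums and its column sums are both $\le\alpha_0+\varepsilon$ (using that a $\limsup$ of a finite sum is at most the sum of the $\limsup$s, and that shifting an index by $|m|\le N$ does not change the limit), so its operator norm is $\le\alpha_0+\varepsilon$. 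This simultaneously produces the Doeblin--Fortet inequality (bound $\|P\Pi_k^\perp f\|_2$ by this tail norm plus a finite-rank near-diagonal corner, and $\|P\Pi_k f\|_2\le\|f\|'_\varepsilon$) and, through the compact-perturbation identity $r_{ess}(\tilde P)=r_{ess}(\Pi_k^\perp\tilde P\,\Pi_k^\perp)$, the bound $r_{ess}\le\alpha_0$ directly.

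It then remains to upgrade quasi-compactness to (\ref{ineg-gap-gene}). Quasi-compactness means $\sigma(P)\cap\{|z|>\alpha_0\}$ consists of finitely many eigenvalues of finite multiplicity. Irreducibility forces the dominant eigenvalue $1$ to be simple, with one-dimensional eigenspace $\C\,1_{\N}$ and Riesz projection $\Pi f=\pi(f)\,1_{\N}$; aperiodicity rules out any other eigenvalue on the unit circle. Writing $P=\Pi+R$ with $\Pi R=R\Pi=0$ gives $P^n=\Pi+R^n$ and $r(R)=:\delta<1$, whence Gelfand's formula yields $\|R^n\|_2\le C_\rho\,\rho^n$ for any $\rho\in(\delta,1)$; since $P^nf-\Pi f=R^nf$ this is exactly (\ref{ineg-gap-gene}).

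The step I expect to be the main obstacle is the quantitative tail estimate producing the \emph{exact} constant $\alpha_0$: the naive Cauchy--Schwarz estimate only recovers the contraction property $\|Pf\|_2\le\|f\|_2$, and it is the symmetric Schur test applied to $\tilde P$ — made transparent by the identity $\sqrt{P(i,i+m)\,P^*(i+m,i)}=\tilde P(i,i+m)$ — that captures the interplay between $P$ and $P^*$ and yields $\alpha_0$. A secondary point demanding care is justifying that the peripheral spectrum reduces to the single simple eigenvalue $1$, which is where irreducibility and aperiodicity are genuinely used.
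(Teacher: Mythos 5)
Your proof is correct, but it reaches the conclusion by a genuinely different route from the paper's. The paper works directly on $\ell^2(\pi)$ and feeds Hennion's theorem with two lemmas: compactness of the embedding $\ell^2(\pi)\hookrightarrow\ell^1(\pi)$ (Lemma~\ref{lem-id-comp}), and the Doeblin--Fortet bound $\|Pf\|_2\le\alpha\|f\|_2+L\|f\|_1$ for each $\alpha>\alpha_0$ (Lemma~\ref{lem-D-F-gene}), whose proof splits $Pf$ over the $2N+1$ diagonals and applies Minkowski's inequality together with the identity $P(i,i+m)^2\pi(i)=P(i,i+m)P^*(i+m,i)\,\pi(i+m)$ --- the very identity you express as $\sqrt{P(i,i+m)P^*(i+m,i)}=\tilde P(i,i+m)$ after unitary conjugation. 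You instead bypass the auxiliary space $\ell^1(\pi)$ and the Doeblin--Fortet formalism entirely: unitary conjugation to $\ell^2(\N)$, a finite-rank corner plus tail-block decomposition, the symmetric Schur test for the tail norm, and invariance of $r_{ess}$ under compact perturbations. This is sound and arguably more elementary in its functional-analytic input; the one point you should make explicit is that the quasi-compactness-based $r_{ess}$ of \cite{Hen93} coincides with the Nussbaum essential spectral radius $\lim_n\bigl(\inf_{K\ \mathrm{compact}}\|T^n-K\|\bigr)^{1/n}$, since that equivalence is what licenses the compact-perturbation step. Note also that the two tail estimates are different inequalities: Minkowski over diagonals yields $\sum_m\sup_{i\ge\ell}\beta_m(i)$, while Schur yields the geometric mean of the sup row-sum and sup column-sum (your column-sum check genuinely needs the index-shift remark); both tend to a limit $\le\alpha_0$, so both suffice. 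Finally, your upgrade from quasi-compactness to (\ref{ineg-gap-gene}) via the Riesz projection $\Pi f=\pi(f)1_{\N}$ and Gelfand's formula is the standard argument that the paper compresses into ``deduced from aperiodicity and irreducibility''; it is complete granted the standard facts that irreducibility makes $1$ a (algebraically) simple eigenvalue --- e.g.\ because fixed vectors of a Hilbert-space contraction and of its adjoint coincide, and $\pi$ is the unique invariant measure --- and that aperiodicity excludes any other peripheral eigenvalue.
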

%
%\vspace*{-2mm}
\begin{proof}{} 
Let $\ell^1(\pi)$ denote the usual Banach space of sequences $(f(i))_{i\in\N}\in\C^{\N}$ satisfying the following condition: $\|f\|_1 := \sum_{i\geq0} |f(i)|\, \pi(i)\ < \infty$. 
%
%\vspace*{-3mm}
\begin{lem} \label{lem-id-comp}
The identity map is compact from $\ell^2(\pi)$ into $\ell^1(\pi)$. 
\end{lem}
%
%\vspace*{-6mm}
%
\begin{lem} \label{lem-D-F-gene}
For any $\alpha>\alpha_0$, there exists a positive constant $L\equiv L(\alpha)$ such that 
$$\forall f\in\ell^2(\pi),\quad \|Pf\|_2 \leq \alpha\, \|f\|_2 + L\|f\|_1.$$
\end{lem}
It follows from these lemmas and from \cite{Hen93} that $P$ is quasi-compact on $\ell^2(\pi)$ with $r_{ess}(P_{|\ell^2(\pi)}) \leq \alpha$. Since $\alpha$ can be chosen arbitrarily close to $\alpha_0$, this gives $r_{ess}(P_{|\ell^2(\pi)}) \leq \alpha_0$. Then (\ref{ineg-gap-gene}) is deduced from aperiodicity and irreducibility assumptions.
\end{proof}
Lemma~\ref{lem-id-comp} follows from the Cantor diagonal procedure. 

\noindent\begin{proof}{ of Lemma~\ref{lem-D-F-gene}}
Under Assumption~(\ref{ass-voisin}) we define 
\begin{equation} \label{beta-k}
\forall i\geq i_0,\ \forall {m}=-N,\ldots,N,\quad \beta_{m}(i) := \sqrt{P(i,i+{m})\, P^*(i+{m},i)}.
\end{equation} 
Let $\alpha>\alpha_0$, with $\alpha_0$ given in (\ref{ass-beta}). Fix $\ell\equiv \ell(\alpha) \geq i_0$ such that 
$\sum_{{m}=-N}^{N} \,  \sup_{i\geq \ell} \beta_{m}(i) \leq \alpha$. For $f\in\ell^2(\pi)$ we have from Minkowski's inequality and the band structure of $P$ for $i\geq \ell$ 
\begin{eqnarray*}
\|Pf\|_2 &\leq& \bigg[\sum_{i<\ell}\big|(Pf)(i)\big|^2\pi(i)\, \bigg]^{1/2} 
+\ \bigg[\sum_{i\geq \ell} \bigg|\sum_{{m}=-N}^{N} P(i,i+{m})\, f(i+{m})\bigg|^2\pi(i)\, \bigg]^{1/2} \nonumber \\ 
&\leq& C_\ell\, \sum_{i<\ell} |(Pf)(i)|\, \pi(i)\,  + \bigg[\sum_{i\geq \ell} \bigg|\sum_{{m}=-N}^{N} P(i,i+{m})\, f(i+{m})\bigg|^2\pi(i)\, \bigg]^{1/2} 
\end{eqnarray*}
where $C_\ell>0$ is derived from equivalent norms on the space $\C^{\ell}$. Note that $\sum_{i<\ell}|(Pf)(i)|\, \pi(i) \leq \|Pf\|_1 \leq \|f\|_1$ so that setting $L:= C_{\ell}$
%\vspace*{-3mm}
\begin{eqnarray} \label{ineg-DF-inter1}
\|Pf\|_2 &\leq& L \|f\|_1 +  \bigg[\sum_{i\geq \ell} \bigg|\sum_{{m}=-N}^{N} P(i,i+{m})\, f(i+{m})\bigg|^2\pi(i)\, \bigg]^{1/2}. 
\end{eqnarray}

%\vspace*{-3mm}

It remains to obtain the expected control of the second terms in the right hand side of (\ref{ineg-DF-inter1}). For ${m}=-N,\ldots,N$, let us define $F_{m}=(F_{m}(i))_{i\in\N}\in\ell^2(\pi)$ by
$$F_{m}(i):= \left \{
    \begin{array}{ll}
      \quad \quad \quad\quad 0 \quad \quad \quad \quad \quad \ \  \text{if } i<\ell \\
      P(i,i+{m})\, f(i+{m}) \quad\quad \text{if } i \geq \ell. 
    \end{array}
    \right.
$$
Then 
\begin{eqnarray*}
\lefteqn{\bigg[\sum_{i\geq \ell} \bigg|\sum_{{m}=-N}^{N} P(i,i+{m})\, f(i+{m})\bigg|^2\pi(i)\, \bigg]^{1/2}= \big\|\sum_{{m}=-N}^{N} F_{m}\|_2}\\
 &\leq &   \sum_{{m}=-N}^{N} \| F_{m}\|_2 =\sum_{{m}=-N}^{N}  \bigg[\sum_{i\geq \ell} P(i,i+{m})^2\, |f(i+{m})|^2\pi(i)\, \bigg]^{1/2} \\
&& =  \sum_{{m}=-N}^{N}  \bigg[\sum_{i\geq \ell} P(i,i+{m})\frac{\pi(i)\, P(i,i+{m})}{\pi_{i+{m}}}\, |f(i+{m})|^2\pi_{i+{m}}\, \bigg]^{1/2} \ \text{(from the definition of $P^*$)}\\ 
& \leq &  \sum_{{m}=-N}^{N}  \big(\sup_{i\geq \ell} \beta_{m}(i)\big) \bigg[\sum_{i\geq \ell}  |f(i+{m})|^2\pi_{i+{m}}\, \bigg]^{1/2} \qquad \text{(from (\ref{beta-k}))}\\ 
\\
& \leq & \bigg(\sum_{{m}=-N}^{N}  \sup_{i\geq \ell} \beta_{m}(i)\bigg)\, \|f\|_2.
\end{eqnarray*}
The statement in Lemma~\ref{lem-D-F-gene} can be deduced from the previous inequality and from (\ref{ineg-DF-inter1}). \end{proof}

%===========================
%==========================
\section{(\ref{ineg-gap-gene}) and geometric ergodicity. Application to RWs with i.d.~bounded increments} \label{sec-stab-expo}
%===========================
%==========================
%\vspace*{-3mm}
We specify Theorem~\ref{theo-spec-gap-gene} in terms of $V-$geometric ergodicity for $V := ({\pi(n)}^{-1/2})_{n\in\N}$. Let $(\cB_{V},\|\cdot\|_{V})$ denote the weighted-supremum space of sequences $(g(n))_{n\in\N}\in\C^\N$ such that $\|g\|_{V} := \sup_{n\in\N} V(n)^{-1}\, |g(n)| < \infty$.
Recall that $P$ is said to be $V$-geometrically ergodic if $P$ satisfies the spectral gap property on $\cB_V$, namely: there exists $C\in(0,+\infty)$ and $\rho\in (0,1)$ such that  
\begin{equation} \label{ineg-gap-V} 
\forall n\geq1, \forall f\in\cB_V,\quad \|P^nf - \Pi f\|_V \leq C\, \rho^n\, \|f\|_V. \tag{\textbf{SG$_V$}} 
\end{equation}
When this property holds, we define 
\begin{equation} \label{def-rho-V} 
\varrho_V := \inf\{\rho\in(0,1) : \text{(\ref{ineg-gap-V}) holds true}\}.
\end{equation} 

%\vspace*{-6mm}

%
\begin{rem} \label{rem_fle-alpha}
Under Assumptions~\emph{(\ref{cond-lim-intro})} and \emph{(\ref{pi-tail})}, we have 
%\vspace*{-2mm}
\begin{equation} \label{fle-alpha}
\alpha_0 := \sum_{{m}=-N}^N \limsup_{i\r+\infty} \sqrt{P(i,i+{m})\, P^*(i+{m},i)} 
 = \begin{cases}
 \text{$\displaystyle\sum_{{m}=-N}^{N} a_{m}\, \tau^{-{m/2}}$} & \text{if }\  \tau\in(0,1) \\[0.12cm]
\quad  a_0 & \text{if }\  \tau=0,  \\
\end{cases}
\end{equation}
Moreover, if $\tau=0$ in \emph{(\ref{pi-tail})}, then $a_{m}=0$ for every ${m}=1,\ldots,N$. 

Indeed, if \emph{(\ref{pi-tail})} holds with $\tau\in(0,1)$, then the claimed formula follows from the definition of $P^*(\cdot,\cdot)$. If $\tau=0$, 
%in \emph{(\ref{pi-tail})}, 
then $a_{m}=0$ for every $m >0$ 
%${m}=1,\ldots,N$ 
since the invariance of $\pi$ gives 
\begin{equation} \label{sum-1-inter}
\sum_{m=-N}^N P(i+m,i)\frac{\pi(i+m)}{\pi(i)} = 1,
\end{equation} 
so that the sequence $\big(P(i+m,i) \pi(i+m)/\pi(i)\big)_i$ must be bounded for each $m<0$. Now observe that we have for $m<0$ 
$$\sqrt{P(i,i+{m})\, P^*(i+{m},i)} = P(i,i+{m})\sqrt{\frac{\pi(i)}{\pi(i+m)}} \longrightarrow 0 \quad \text{when  $i\r+\infty$.}$$
Next, setting $\ell = i+m$, we obtain for $m>0$ 
\begin{eqnarray*}
\sqrt{P(i,i+{m})\, P^*(i+{m},i)} &=& P(\ell-m,\ell)\sqrt{\frac{\pi(\ell-m)}{\pi(\ell)}} \\
&=& P(\ell-m,\ell)\frac{\pi(\ell-m)}{\pi(\ell)}\sqrt{\frac{\pi(\ell)}{\pi(\ell-m)}} 
\ \ \  \longrightarrow 0 \quad \text{when  $i\r+\infty$}
\end{eqnarray*}
since we know that $\big(P(\ell-m,\ell)\, \pi(\ell-m)/\pi(\ell)\big)_\ell$ is bounded. Hence $\alpha_0=a_0$. 
\end{rem}
%
%\vspace*{-6mm}
\begin{pro}  \label{pro-RW-SG} 
If $P$ and $\pi$ satisfy Assumptions~\emph{(\ref{cond-lim-intro})}, \emph{(\ref{pi-tail})} and \emph{(\ref{MoySautBorne})}, then $P$ satisfies~\emph{(\ref{ass-beta})} (and~$\alpha_0<1$ with $\alpha_0$ given in (\ref{fle-alpha})). Moreover $P$ satisfies both \emph{(\ref{ineg-gap-gene})} and \emph{(\ref{ineg-gap-V})}, we have $\max(r_{ess}(P_{|{\cal B}_V}),r_{ess}(P_{|\ell^2(\pi)})) \leq \alpha_0$, and the following assertions hold: 
%\vspace*{-3mm}
\begin{enumerate}[(a)]
  \item if $\varrho_{V} \leq \alpha_0$, 
	then $\varrho_2 \leq \alpha_0$; 
	\item if $\varrho_{V} > \alpha_0$, 
	then $\varrho_2 = \varrho_{V}$.
\end{enumerate}
\end{pro}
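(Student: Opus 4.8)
The plan is to establish the four conclusions in turn; the genuinely new work is the strict inequality $\alpha_0<1$ and the identification $\varrho_2=\varrho_V$, the rest being a transcription of Theorem~\ref{theo-spec-gap-gene} to the space $\cB_V$ and an embedding argument.

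\emph{The inequality $\alpha_0<1$.} By Remark~\ref{rem_fle-alpha} it suffices to examine the explicit value of $\alpha_0$. When $\tau=0$ the remark gives $\alpha_0=a_0$ and $a_m=0$ for $m>0$; since the $a_m$ are limits of the rows of a band stochastic matrix one has $\sum_{m=-N}^N a_m=1$, and (\ref{MoySautBorne}) forces $a_m>0$ for at least one $m<0$, whence $a_0=1-\sum_{m<0}a_m<1$. When $\tau\in(0,1)$ I would introduce the Laurent polynomial $\phi(x):=\sum_{m=-N}^N a_m x^m$, which has nonnegative coefficients and is therefore convex on $(0,\infty)$, since $\phi''(x)=\sum_m m(m-1)a_m x^{m-2}$ is a sum of nonnegative terms. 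Two normalisations pin it down: $\phi(1)=\sum_m a_m=1$, and, passing to the limit in the invariance identity (\ref{sum-1-inter}) with the help of (\ref{pi-tail}) and (\ref{cond-lim-intro}), $\phi(1/\tau)=\sum_m a_m\tau^{-m}=1$. Because $\tau^{-1/2}$ lies strictly between $1$ and $1/\tau$, convexity yields $\alpha_0=\phi(\tau^{-1/2})\le 1$, with equality only if $\phi$ is affine on $[1,1/\tau]$, hence constant equal to $1$; but that forces $\phi'(1)=\sum_m m\,a_m=0$, contradicting (\ref{MoySautBorne}). Thus $\alpha_0<1$, and in particular (\ref{ass-beta}) holds.

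\emph{The spectral gap properties and the essential radii.} With (\ref{ass-beta}) available, Theorem~\ref{theo-spec-gap-gene} directly gives (\ref{ineg-gap-gene}) and $r_{ess}(P_{|\ell^2(\pi)})\le\alpha_0$. For the $\cB_V$-side I would run the same Hennion scheme: a Doeblin--Fortet inequality on $\cB_V$ analogous to Lemma~\ref{lem-D-F-gene}. The decisive point is that with $V=\pi^{-1/2}$ one gets, for $i\ge\ell$, the bound $\pi(i)^{1/2}|(Pg)(i)|\le\big(\sum_{m=-N}^N\beta_m(i)\big)\,\|g\|_V$ with exactly the $\beta_m(i)=\sqrt{P(i,i+m)P^*(i+m,i)}$ of (\ref{beta-k}); the tail constant is again $\alpha_0$, while the finitely many indices $i<\ell$ produce a remainder dominated by a weaker norm for which the inclusion of $\cB_V$ is compact. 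Hennion's criterion then yields quasi-compactness on $\cB_V$ with $r_{ess}(P_{|\cB_V})\le\alpha_0$, upgraded by irreducibility and aperiodicity to (\ref{ineg-gap-V}); combined with the previous bound this is $\max(r_{ess}(P_{|\cB_V}),r_{ess}(P_{|\ell^2(\pi)}))\le\alpha_0$. (Equivalently this is the corresponding result of \cite{HerLedJAP14}, whose quasi-compactness constant coincides with $\alpha_0$.)

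\emph{The link (a)--(b) and the main obstacle.} The structural fact is the continuous embedding $\ell^2(\pi)\hookrightarrow\cB_V$: for $f\in\ell^2(\pi)$ one has $\pi(i)^{1/2}|f(i)|\le\|f\|_2$, so $\|f\|_V\le\|f\|_2$. By quasi-compactness on each space, $1$ being a simple eigenvalue with spectral projection $\Pi$, both rates are governed by the peripheral spectrum: writing $\Lambda_2$ (resp.\ $\Lambda_V$) for the eigenvalues of modulus $>\alpha_0$ other than $1$ on $\ell^2(\pi)$ (resp.\ $\cB_V$), one has $\varrho_2=\max(c_2,\max_{\lambda\in\Lambda_2}|\lambda|)$ and $\varrho_V=\max(c_V,\max_{\lambda\in\Lambda_V}|\lambda|)$ with $c_2,c_V\le\alpha_0$. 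The embedding gives $\Lambda_2\subseteq\Lambda_V$ at once, since an $\ell^2(\pi)$-eigenfunction lies in $\cB_V$ and the eigenvalue equation is preserved pointwise. This already settles (a): if $\varrho_V\le\alpha_0$ then $\Lambda_V=\emptyset$, so $\Lambda_2=\emptyset$ and $\varrho_2\le\alpha_0$; and it gives $\varrho_2\le\varrho_V$ in (b). The remaining, and hardest, point is the reverse inclusion $\Lambda_V\subseteq\Lambda_2$ needed for $\varrho_2\ge\varrho_V$: a $\cB_V$-eigenfunction $g$ for $|\lambda|>\alpha_0$ must actually lie in $\ell^2(\pi)$. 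I would reduce this to an unweighted problem by setting $u:=\pi^{1/2}g$, so that $g\in\cB_V\iff u\in\ell^\infty$, $g\in\ell^2(\pi)\iff u\in\ell^2(\N)$, and the eigenrelation becomes $\sum_{m=-N}^N\beta_m(i)\,u(i+m)=\lambda\,u(i)$: thus $u$ is a bounded eigenvector of the nonnegative band operator with kernel $\beta_m(i)$, whose tail row-sums tend to $\alpha_0<|\lambda|$, and the claim is that such a bounded eigenvector is automatically square-summable. This is precisely the two-space comparison of peripheral spectra for quasi-compact operators, which I would obtain from the abstract results of \cite{HerLedJAP14} (equivalently, by verifying that the finite-rank spectral projection attached to the part of $\sigma(P)$ outside the disc of radius $\alpha_0$ maps $\cB_V$ into $\ell^2(\pi)$), or by a Doeblin--Fortet bootstrap exploiting $|\lambda|>\alpha_0$ to promote $u\in\ell^\infty$ to $u\in\ell^2$. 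Granting it, $\Lambda_2=\Lambda_V$, so when $\varrho_V>\alpha_0$ both maxima are attained on the same nonempty set and $\varrho_2=\varrho_V$. I expect this reverse spectral inclusion to be the main difficulty, the other steps being either the convexity computation or a direct reuse of Theorem~\ref{theo-spec-gap-gene}.
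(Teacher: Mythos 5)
Your proposal is correct and follows essentially the same route as the paper's own proof: convexity of the Laurent polynomial with the two normalisations $\phi(1)=\phi(1/\tau)=1$ and (\ref{MoySautBorne}) ruling out the degenerate affine case (the paper works with $\psi(t)=\phi(1/t)$ and the points $\psi(1)=\psi(\tau)=1$); Theorem~\ref{theo-spec-gap-gene} for the $\ell^2(\pi)$ statements; a Doeblin--Fortet/drift bound on $\cB_V$ whose tail constant is $\sum_{m}\beta_m(i)=(PV)(i)/V(i)\to\alpha_0$, which is exactly the paper's inequality (\ref{drift-V}) combined with \cite{MeyTwe93} and \cite[Prop.~3.1]{HerLedJAP14}; the embedding $\ell^2(\pi)\subset\cB_V$ for assertion (a) and for $\varrho_2\le\varrho_V$; and transfer of $\cB_V$-eigenfunctions into $\ell^2(\pi)$ for assertion (b). On this last step, which you correctly isolate as the crux, two comments. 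First, the slogan ``a bounded eigenvector is automatically square-summable'' is misleading if read as $|f|=O(V)\Rightarrow f\in\ell^2(\pi)$: boundedness alone only gives $|f(n)|^2\pi(n)=O(1)$, which is not summable. The paper's citation \cite[Prop.~2.2]{HerLedJAP14} supplies the strict power saving $|f(n)|=O(V(n)^{\beta})$ with $\beta\in(0,1)$, and it is this saving, combined with the geometric decay of $\pi$ from (\ref{pi-tail}), that yields $f\in\ell^2(\pi)$. Second, the bootstrap you sketch in the coordinates $u=\pi^{1/2}g$ does work and is genuinely more elementary than the citation: for $i\ge\ell$ one has $|\lambda|\,|u(i)|\le(\alpha_0+\varepsilon)\sup_{j\ge i-N}|u(j)|$, so the block suprema $T_k:=\sup_{i\ge \ell+kN}|u(i)|$ satisfy $T_k\le\theta\, T_{k-1}$ with $\theta:=(\alpha_0+\varepsilon)/|\lambda|<1$; hence $u$ decays geometrically and lies in $\ell^2(\N)$, which closes (b) without any external input. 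So your primary route reproduces the paper's argument (with the caveat above about what exactly must be imported from \cite{HerLedJAP14}), while your alternative, once the block iteration is written out, replaces that import by a half-page computation.
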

%
%\vspace*{-3mm}
\begin{proof}{}
If $\tau=0$ in (\ref{pi-tail}), then $\alpha_0=a_0<1$ from (\ref{fle-alpha}) and (\ref{MoySautBorne}). Now assume that (\ref{pi-tail}) holds with $\tau\in(0,1)$. Then $\alpha_0=\sum_{m=-N}^{N} a_{m}\, \tau^{-{m/2}} = \psi(\sqrt{\tau})$, where:  
$\forall t>0,\ \psi(t) := \sum_{k=-N}^{N} a_{k}\, t^{-k}$. Moreover it easily follows from the invariance of $\pi$ that $\psi(\tau)=1$. Inequality $\alpha_0 =\psi(\sqrt{\tau}) < 1$ then follows from the following assertions: $\forall t \in(\tau,1),\ \psi(t)<1$ and $\forall t \in (0,\tau) \cup (1,+\infty),\ \psi(t) > 1$. To prove these properties, note that $\psi(\tau)=\psi(1)=1$ and that $\psi$ is convex on $(0,+\infty)$ since the second derivative of $\psi$ is positive on $(0,+\infty)$. Moreover we have $\lim_{t\r+\infty} \psi(t)= +\infty$ since $a_k>0$ for some $k<0$ (use $\psi(\tau)=\psi(1)=1$ and $\tau\in(0,1)$). Similarly, $\lim_{t\r 0^{+}} \psi(t) =  +\infty$ since $a_k>0$ for some $k>0$. This gives the desired properties on $\psi$ since $\psi'(1) > 0$ from~(\ref{MoySautBorne}).    

(\ref{ineg-gap-gene}) and $r_{ess}(P_{|\ell^2(\pi)}) \leq \alpha_0$ follow from Theorem~\ref{theo-spec-gap-gene}. Next (\ref{ineg-gap-V}) is deduced from the well-known link (see \cite{MeyTwe93}) between geometric ergodicity and the following drift inequality: 
%\vspace*{-2mm}
\begin{equation} \label{drift-V}
\forall\alpha\in(\alpha_0,1),\ \exists L\equiv L_\alpha>0, \quad PV \leq \alpha V + L\, 1_{\N}. 
\end{equation}
This inequality holds from 
%\vspace*{-3mm}
$$\frac{(PV)(i)}{V(i)} 
= \sum_{m=-N}^N P(i,i+m)\left(\frac{\pi(i)}{\pi(i+m)}\right)^{\frac{1}{2}} \xrightarrow[i\r+\infty]{}  \alpha_0.
$$
This gives (\ref{drift-V}), from which (\ref{ineg-gap-V}) is derived using aperiodicity and irreducibility. It also follows from  (\ref{drift-V}) that $r_{ess}(P_{|{\cal B}_V}) \leq \alpha$ (see \cite[Prop.~3.1]{HerLedJAP14}). Thus $r_{ess}(P_{|{\cal B}_V}) \leq \alpha_0$. 

Now we prove $(a)$ and $(b)$ using the spectral properties of \cite[Prop.~2.1]{HerLedJAP14} of both $P_{|\ell^2(\pi)}$ and $P_{|{\cal B}_V}$ (due to quasi-compactness). We will also use the following obvious inclusion: $\ell^2(\pi) \subset \cB_{V}$. In particular  every eigenvalue of $P_{|\ell^2(\pi)}$ is also an eigenvalue for $P_{|{\cal B}_V}$. 
First assume that $\varrho_{V} \leq \alpha_0$. Then there is no eigenvalue for $P_{|{\cal B}_V}$ in the annulus $\Gamma:=\{\lambda\in\C : \alpha_0 < |\lambda| < 1\}$ since $r_{ess}(P_{|{\cal B}_V}) \leq \alpha_0$. From $\ell^2(\pi) \subset \cB_{V}$ it follows that there is also no eigenvalue for $P_{|\ell^2(\pi)}$ in this  annulus. Hence $\varrho_2 \leq \alpha_0$ since $r_{ess}(P_{|\ell^2(\pi)}) \leq \alpha_0$. 
Second assume that $\varrho_{V} > \alpha_0$. Then $P_{|{\cal B}_V}$ admits an eigenvalue $\lambda\in\C$ such that $|\lambda| = \varrho_{V}$. Let $f\in\cB_{V}$, $f\neq 0$, such that $Pf = \lambda f$. We know from \cite[Prop.~2.2]{HerLedJAP14} that there exists some $\beta\equiv\beta_\lambda\in(0,1)$ such that $|f(n)| = \text{O}(V(n)^{\beta}) = \text{O}(\pi(n)^{-\beta/2})$, so that $|f(n)|^2\pi(n) = \text{O}(\pi(n)^{(1-\beta)})$, thus $f\in \ell^2(\pi)$ from (\ref{pi-tail}). We have proved that $\varrho_2 \geq \varrho_{V}$. Finally the converse inequality is true since every eigenvalue of $P_{|\ell^2(\pi)}$ is an eigenvalue for $P_{|{\cal B}_V}$. Thus $\varrho_2 = \varrho_{V}$.  
\end{proof}
%
%\vspace*{-3mm}
\begin{ex} [RWs with i.d.~bounded increments] \label{ex-rw-gene}
Let $P$ be defined as follows. There exist some positive integers $c,g,d\in\N^*$ such that 
%\vspace*{-4mm}
\begin{subequations}
\begin{gather}
\forall i\in\{0,\ldots,g-1\},\quad \sum_{j = 0}^{c} P(i,j)=1; \label{cond-bound-prob} \\
\forall i\ge g, \forall j\in\N, \quad P(i,j) = \begin{cases}
 a_{j-i} & \text{if }\  i-g\leq j \leq i+d \\
 0 & \text{otherwise.}  \\
\end{cases} \label{Def_HRW-ter} \\
(a_{-g},\ldots,a_d)\in[0,1]^{g+d+1} : a_{-g}>0, \ a_d>0, \ \sum_{k=-g}^{d} a_k=1. 
\label{Def_HRW-bis} 
\end{gather} 
\end{subequations}
%

%\vspace*{-6mm}

\noindent We assume that $P$ is aperiodic and irreducible, and that Assumtion~\emph{(\ref{MoySautBorne})} holds, that is: $\sum_{k=-g}^{d} k\, a_{k}\, < 0$. Then $P$ admits a unique invariant distribution $\pi$, and the conclusions of Proposition~\ref{pro-RW-SG} hold. Moreover it can be derived from standard results of linear difference equation that $\pi(n) \sim c\, \tau^n$ when $n\r+\infty$, with $\tau\in(0,1)$ defined by $\psi(\tau)  = 1$, where $\psi(t) := \sum_{k=-N}^{N} a_{k}\, t^{-k}$. Thus, if $\gamma := \tau^{-1/2}$, then $\cB_V=\{(g(n))_{n\in\N}\in\C^\N,\ \sup_{n\in\N} \gamma^{-n}\, |g(n)| < \infty\}$. Then we know from \cite[Prop.~3.2]{HerLedJAP14} that $r_{ess}(P_{|{\cal B}_V}) =\alpha_0$ with $\alpha_0$ given in (\ref{fle-alpha}), and that $\varrho_{V}$ can be computed from an algebraic polynomial elimination. More precisely, the procedure in \cite{HerLedJAP14} developed for a special value $\hat\gamma$ can be applied for $\gamma := \tau^{-1/2}$ by considering $\Gamma:=\{\lambda\in\C : \psi(\sqrt{\tau}) < |\lambda| < 1\}$. When Assertion~$(b)$ of Proposition~\ref{pro-RW-SG} applies, we obtain the exact value of $\varrho_2$  (see Example~\ref{ex-g-2-d-1}). Property~\emph{(\ref{ineg-gap-gene})} is proved in \cite[Th.~2]{Wu12} under an extra weak reversibility assumption (with no explicit bound on $\varrho_2$). However, except in case $g=d=1$ where reversibility is automatic, a RW with i.d.~bounded increments is not reversible or even weak reversible in general. Note that no reversibility condition is required in Proposition~\ref{pro-RW-SG}. 
\end{ex}
%
%\vspace*{-6mm}
%==========================
\begin{ex} [Numerical examples in case $g=2$ and $d=1$] \label{ex-g-2-d-1}
%========================================
Let $P$ be defined by 
%\vspace*{-2mm}
\begin{gather}
P(0,0) = a \in(0,1),\quad P(0,1) = 1-a,\quad P(1,0) = b\in(0,1), \quad P(1,2) = 1-b \label{bound-deux-vois} \\
\forall n\geq 2,\ P(n,n-2) = 1/2,\ P(n,n-1) = 1/3,\ P(n,n) = 0, \ P(n,n+1) = 1/6. \label{trans-deux-vois}
\end{gather}
The form of boundary probabilities in (\ref{bound-deux-vois}) and the special values in (\ref{trans-deux-vois}) are chosen for convenience. Other (finitely many) boundary probabilities in (\ref{bound-deux-vois}) and other values in (\ref{trans-deux-vois})  could be considered provided that $P$ is irreducible and aperiodic and that $(a_{-2},a_{-1},a_0,a_1)$ satisfies $a_{-2}, a_1 >0$ and \emph{(\ref{MoySautBorne})} i.e. $a_1 < 2a_{-2} + a_{-1}$. Here the fonction $\psi$ is given by: $\psi(t) := t^2/2  + t/3 + 1/6t = 1+ (t-1)(t^2-5t/3-1/3)/2t$. 
Then function $\psi(\cdot)-1$ has a unique zero over $(0,1)$  which is $\tau = (\sqrt{37}-5)/6 \approx 0.1805$ and $\alpha_0 = \psi(\sqrt{\tau}) \approx 0.6242$. Let $\gamma := 1/\sqrt{\tau} \approx 2.3540$ and $V :=(\gamma^n)_{n\in\N}$. Using the procedure from \cite{HerLedJAP14} and Proposition~\ref{pro-RW-SG},  we give in Table~\ref{Table} the values of $\alpha_0$, $\varrho_{V}$ and $\varrho_2$ for this instance.
\begin{table}[h]
    \renewcommand{\arraystretch}{1.2}
\centering
\begin{tabular}{c||c|c||c|c||c|c} \hline
$(a,b)$ &  $\alpha_0$ & $\rho_{V}$  & $\varrho_2$  \\\hline
$(1/2,1/2)$ &  0.624 & 0.624 & $\leq 0.624$\\\hline
$(1/10,1/10)$ &  0.624 & 0.688 & 0.688 \\\hline
$(1/50,1/50)$ & 0.624 & 0.757& 0.757\\\hline
\end{tabular}
\caption{Convergence rate on $\ell^2(\pi)$ for different 
boundary transition probabilities $(a,b)$} 
\label{Table}
\end{table}
\end{ex}
%
%\vspace*{-12mm}
\begin{rem}
If \emph{(\ref{pi-tail})} in Proposition~\ref{pro-RW-SG} is reinforced by the condition $\pi(n) \sim c\, \tau^n$ when $n\r+\infty$ with $\tau\in(0,1)$ (e.g.~see Example~\ref{ex-rw-gene}), then let us consider $\cB_V=\{(g(n))_{n\in\N} \in \C^\N,\ \sup_{n\in\N} \gamma^{-n} \, |g(n)| < \infty\}$ with $\gamma := \tau^{-1/2}$. Then we deduce from \cite[Prop.~3.2]{HerLedJAP14} that $r_{ess}(P_{|{\cal B}_V}) =\alpha_0$ with $\alpha_0$ given in (\ref{fle-alpha}), so that $\varrho_{V} \leq \alpha_0$ implies that $\varrho_{V} = \alpha_0$ since $\varrho_{V} \geq r_{ess}(P_{|{\cal B}_V})$. Then it follows from Proposition~\ref{pro-RW-SG} that $\varrho_2 \leq \varrho_{V}$ and that this inequality is an equality when $\varrho_{V}>\alpha_0$. The passage from \emph{(\ref{ineg-gap-V})} to \emph{(\ref{ineg-gap-gene})} and the inequality $\varrho_2 \leq \varrho_{V}$ was established in \cite{RobRos97,Bax05} for general reversible $V$-geometrically ergodic Markov kernels. Again note that no reversibility condition is assumed in Proposition~\ref{pro-RW-SG}. 
\end{rem}

%\vspace*{-10mm}

%===========================
%==========================
\section{Applications to the reversible case} \label{sec-reversible}
%===========================
%==========================
%\vspace*{-4mm}
The reversible case corresponds to the condition $P=P^*$ (i.e.~$P$ is self-adjoint in $\ell^2(\pi)$), namely: $\forall (i,j)\in\N^2,\ \pi(i) \, P(i,j) = \pi(j)  \, P(j,i)$ (detailed balance condition). Then (\ref{ineg-gap-gene}) is equivalent to the condition $\varrho_2=\|P-\Pi\|_2 < 1$, where $\|\cdot\|_2$ denotes here the operator norm on $\ell^2(\pi)$. Thus, when  (\ref{ineg-gap-gene}) holds in the reversible case, we have $C=1$ and $\rho=\varrho_2$, that is  
\begin{equation} \label{ineg-gap-intro-rho2}
\forall n\geq 1,\ \forall f\in\ell^2(\pi),\quad \|P^nf - \pi(f){\bf 1}\|_2 \leq \, {\varrho_2}^n\, \|f\|_2. 
\end{equation} 
%
%\vspace*{-10mm}
\begin{cor} \label{rem-surprise} 
If $P$ is reversible, then: 
%\vspace*{-3mm}
\leftmargini 1.2em
\begin{enumerate}
  \item $P$ satisfies \emph{(\ref{ineg-gap-gene})} and $r_{ess}(P_{|\ell^2(\pi)}) \leq \alpha_0$ under \emph{(\ref{ass-beta})}, with:  
%\vspace*{-2mm}
\begin{equation*} 
\alpha_0 :=  \sum_{{m}=-N}^N \left(\limsup_{i\r+\infty} \sqrt{P(i,i+{m})\, P(i+{m},i)}\right)\  <1.
\end{equation*}
%\vspace*{-6mm}

	\item If Condition~\emph{(\ref{cond-lim-intro})} holds true, then 
		%\vspace*{-3mm}
\begin{equation} \label{cond-lim-rev-bis}
\alpha_0 = 1 - \sum_{m=1}^N\big(\sqrt{a_m} - \sqrt{a_{-m}}\big)^2.
\end{equation}
Consequently, 
if $a_m \neq a_{-m}$ for at least one $m\in\{1,\ldots,N\}$, then $P$ satisfies \emph{(\ref{ass-beta})}. 
%\vspace*{-1mm}
\item If $P$ satisfies \emph{(\ref{cond-lim-intro})} and if $\pi$ satisfies \emph{(\ref{pi-tail})} with $\tau\in(0,1)$, then $a_m \neq a_{-m}$ provided that $a_{m} \neq 0$. Consequently, if $a_m \neq 0$ for some $|m| \in \{1,\ldots,N\}$,  
then the conclusions of Proposition~\ref{pro-RW-SG} hold with $\alpha_0$ given in~(\ref{cond-lim-rev-bis}). 
%If $P$ satisfies \emph{(\ref{cond-lim-intro})} and if $\pi$ satisfies \emph{(\ref{pi-tail})} with $\tau\in(0,1)$, then $a_m \neq a_{-m}$ for each $m\in\{1,\ldots,N\}$ and the conclusions of Proposition~\ref{pro-RW-SG} hold with $\alpha_0$ given in~(\ref{cond-lim-rev-bis}).    
\item If $P$ satisfies \emph{(\ref{cond-lim-intro})} with $a_0<1$ and if $\pi$ satisfies \emph{(\ref{pi-tail})} with $\tau=0$, then the conclusions of Proposition~\ref{pro-RW-SG} hold. 
\end{enumerate}
\end{cor}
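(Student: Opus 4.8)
The plan is to dispatch the four items in turn, using Theorem~\ref{theo-spec-gap-gene}, Remark~\ref{rem_fle-alpha} and Proposition~\ref{pro-RW-SG}, with reversibility entering only through the detailed balance identity $\pi(i)\,P(i,i+m)=\pi(i+m)\,P(i+m,i)$. Item~1 is immediate: reversibility means $P=P^*$, so $P^*(i+m,i)=P(i+m,i)$ and the general quantity $\alpha_0$ of~(\ref{ass-beta}) collapses to the stated geometric-mean expression, after which the two assertions are Theorem~\ref{theo-spec-gap-gene} read verbatim. For item~2, I would first note that under~(\ref{cond-lim-intro}) one has $P(i,i+m)\to a_m$ and, after relabelling $j=i+m$, $P(i+m,i)=P(j,j-m)\to a_{-m}$, so every $\limsup$ is a genuine limit and $\alpha_0=a_0+2\sum_{m=1}^N\sqrt{a_m a_{-m}}$. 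Passing to the limit in the row-sum identity $\sum_{m=-N}^N P(i,i+m)=1$ (valid for $i\geq i_0$ by~(\ref{ass-voisin})) gives the constraint $\sum_{m=-N}^N a_m=1$, hence $\sum_{m=1}^N(a_m+a_{-m})=1-a_0$; substituting this into $1-\sum_{m=1}^N(\sqrt{a_m}-\sqrt{a_{-m}})^2=1-\sum_{m=1}^N(a_m+a_{-m})+2\sum_{m=1}^N\sqrt{a_m a_{-m}}$ returns exactly $\alpha_0$, which is~(\ref{cond-lim-rev-bis}). Since each $(\sqrt{a_m}-\sqrt{a_{-m}})^2$ is nonnegative and vanishes iff $a_m=a_{-m}$, this yields $\alpha_0\le 1$ with strict inequality as soon as $a_m\neq a_{-m}$ for one $m$, i.e.~(\ref{ass-beta}).

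For item~3, detailed balance does the real work: it rewrites $P(i+m,i)=(\pi(i)/\pi(i+m))\,P(i,i+m)$, and letting $i\to+\infty$ with~(\ref{cond-lim-intro}) and $\pi(i+m)/\pi(i)\to\tau^m$ from~(\ref{pi-tail}) produces the relation $a_{-m}=\tau^{-m}a_m$. Because $\tau\in(0,1)$ forces $\tau^{-m}\neq 1$ for $m\neq 0$, any $a_m\neq 0$ gives $a_{-m}\neq a_m$. To obtain the stated consequence I would apply Proposition~\ref{pro-RW-SG} directly, which means verifying its hypotheses~(\ref{cond-lim-intro}),~(\ref{pi-tail}) and~(\ref{MoySautBorne}): the first two are assumed, while~(\ref{MoySautBorne}) is not automatic and must be extracted from the same relation, via $\sum_k k\,a_k=\sum_{m=1}^N m(a_m-a_{-m})=\sum_{m=1}^N m\,a_m(1-\tau^{-m})<0$, the strictness coming from some $a_m>0$ with $m\ge 1$ (and if the nonzero coefficient sits at a negative index $-k$, the relation $a_k=\tau^k a_{-k}>0$ moves it to the positive side). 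Proposition~\ref{pro-RW-SG} then applies, with $\alpha_0$ given by~(\ref{cond-lim-rev-bis}) through item~2.

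For item~4, Remark~\ref{rem_fle-alpha} already gives that $\tau=0$ forces $a_m=0$ for every $m\in\{1,\ldots,N\}$ and $\alpha_0=a_0<1$. Here too the only thing to supply before invoking Proposition~\ref{pro-RW-SG} is~(\ref{MoySautBorne}): the limiting row-sum $\sum_m a_m=1$ gives $\sum_{m=1}^N a_{-m}=1-a_0>0$, so some $a_{-m}>0$ and $\sum_k k\,a_k=-\sum_{m=1}^N m\,a_{-m}<0$. With~(\ref{cond-lim-intro}),~(\ref{pi-tail}) at $\tau=0$, and~(\ref{MoySautBorne}) in hand, Proposition~\ref{pro-RW-SG} (whose proof already covers the case $\tau=0$) delivers the conclusions. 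The main obstacle I anticipate is precisely this point: (\ref{MoySautBorne}) is not listed among the hypotheses of items~3 and~4 yet is required to legitimately quote Proposition~\ref{pro-RW-SG}, so the crux is to recover it for free from reversibility (the identity $a_{-m}=\tau^{-m}a_m$), from~(\ref{pi-tail}) and from stochasticity; by contrast items~1 and~2 reduce to bookkeeping and a single algebraic identity.
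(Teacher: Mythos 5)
Your proof is correct, and on items 1 and 2 it follows the paper's own route exactly: reversibility collapses $P^*$ to $P$, and Formula~(\ref{cond-lim-rev-bis}) comes from $\alpha_0=\sum_{m=-N}^N\sqrt{a_m\,a_{-m}}$ combined with the limiting row-sum $\sum_{m=-N}^N a_m=1$; your item-3 identity $a_{-m}=\tau^{-m}a_m$ is likewise precisely the paper's use of detailed balance. Where you genuinely diverge is in how you license the appeal to Proposition~\ref{pro-RW-SG} in items 3 and 4. The paper never verifies \emph{(\ref{MoySautBorne})}: it notes that the $\alpha_0$ of (\ref{cond-lim-rev-bis}) coincides with that of (\ref{fle-alpha}), that reversibility already forces $\alpha_0<1$, and then asserts that all the spectral properties of the proposition remain valid --- implicitly re-running its proof, in which \emph{(\ref{MoySautBorne})} enters only through the inequality $\alpha_0<1$ (via $\psi'(1)>0$). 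You instead reconstruct \emph{(\ref{MoySautBorne})} itself: in item~3 from $\sum_k k\,a_k=\sum_{m=1}^N m\,a_m(1-\tau^{-m})<0$, with the index-flip $a_k=\tau^k a_{-k}$ to transfer a nonzero coefficient from a negative index to a positive one (which, incidentally, matches the authors' own editorial note after the corollary that the hypothesis should be read as allowing $m<0$), and in item~4 from $a_m=0$ for $m>0$, $a_0<1$ and the limiting row-sum, giving $\sum_k k\,a_k=-\sum_{m=1}^N m\,a_{-m}<0$. Your route buys a strictly black-box citation of Proposition~\ref{pro-RW-SG} as stated, and in doing so it closes a point the paper glosses over (the corollary quotes the proposition's conclusions although \emph{(\ref{MoySautBorne})} is among its stated hypotheses and not among the corollary's); the paper's route is shorter but requires the reader to inspect the proposition's proof to see that only $\alpha_0<1$ is actually used. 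Both arguments are sound, and all your intermediate computations (the limit $P(i+m,i)\to a_{-m}$ after relabelling, the expansion of $1-\sum_{m=1}^N(\sqrt{a_m}-\sqrt{a_{-m}})^2$, the strictness discussion) check out.
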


- Dans le corollaire 1, je mettrai : Consequently, if $a_m \neq 0$ for
$|m| \in \{1,\ldots,N\}$ car vaut aussi pour les $m<0$

Keep in mind that all our results are stated for positive recurrent Markov kernels. For instance, for Markov chain associated with $P(i,i-1):=p,  P(i,i):=r, P(i,i+1):=q$ where $p+r+q=1$, Formula~(\ref{cond-lim-rev-bis}) is $\alpha_0 = 1 - (\sqrt{q} - \sqrt{p})^2$, but the existence of $\pi$ is only guaranteed  when $p>q$.  

\noindent \begin{proof}{}
The first statement follows from Theorem~\ref{theo-spec-gap-gene} and reversibility. Next (\ref{cond-lim-intro}) gives $\alpha_0 =  \sum_{{m}=-N}^N \sqrt{a_{m}\, a_{-m}}$, hence Assertion~2.~since $\sum_{m=-N}^N a_m=1$. If moreover (\ref{pi-tail}) holds with $\tau\in(0,1)$, then $a_m \neq a_{-m}$ for every $m\in\{1,\ldots,N\}$ since $\tau^m\, a_{-m} = a_m$ from the balance condition. Thus, under (\ref{cond-lim-intro}) and (\ref{pi-tail}) with $\tau\in(0,1)$, we obtain from Assertion~2.~that $\alpha_0 <1$. Moreover, since the real numbers $\alpha_0$ given in~(\ref{cond-lim-rev-bis}) and in (\ref{fle-alpha}) are equal, all the spectral properties obtained in Proposition~\ref{pro-RW-SG} remain valid. Idem for Assertion~4.~from Remark~\ref{rem_fle-alpha}. 
\end{proof}

\subsection{Birth-and-Death Markov chains (BDMC)} \label{subsec-ex-bdmc}
%\vspace*{-2mm}

The transition kernel $P:=(P(i,j))_{(i,j)\in\N^2}$ of a Birth-and-Death Markov chains is defined by
\begin{equation} \label{matrix-BDMC}
	P := \begin{pmatrix}
	r_0 & q_0 &0 & \cdots & \cdots\\
	p_1 & r_1 & q_1 & \ddots &  \\
	0 & 	p_2 & r_2 & q_2 & \ddots &  \\
	\vdots & \ddots & \ddots & \ddots & \ddots  
	\end{pmatrix}.
\end{equation}
Recall that, under the following conditions 
%\vspace*{-4mm}
\begin{equation} \label{cond-irred-aper}
r_0 < 1, \qquad \forall i \ge 1, \quad 0< q_i,p_i <1, \quad  S:=1+\sum_{i=1}^{\infty} \prod_{j=1}^i \frac{q_{j-1}}{p_j} < \infty,
\end{equation}
$P$ is irreducible, aperiodic and $\pi$ (unique) is given by: 
$\pi(0)= 1/S, \ \pi(i) = (\prod_{j=1}^i \frac{q_{j-1}}{p_j})/S$. 
Moreover it is well-known that $P$ is reversible w.r.t.~$\pi$. Finally Condition~(\ref{ass-beta}) writes as: 
%\vspace*{-1mm}  
\begin{equation} \label{cond-D-F-rever}
\alpha_0:= \limsup_i \sqrt{p_iq_{i-1}} + \limsup_i r_i  + \limsup_i \sqrt{q_{i}p_{i+1}}  < 1.
\end{equation} 
Consequently, under Conditions~(\ref{cond-irred-aper}) and (\ref{cond-D-F-rever}), $P$ satisfies (\ref{ineg-gap-gene}) and $r_{ess}(P_{|\ell^2(\pi)})\leq \alpha_0$. In particular, if the sequences $(p_i)_{i\in\N^*}$, $(r_i)_{i\in\N}$ and $(q_i)_{i\in\N}$ in (\ref{matrix-BDMC}) admit a limit when $i\r+\infty$, say $p,r,q$,  then (\ref{ineg-gap-gene}) holds provided that $p>q$. Moreover $r_{ess}(P_{|\ell^2(\pi)})\leq 1 - (\sqrt{p}-\sqrt{q})^2$. 
%
%\vspace*{-3mm}
\begin{ex} [State-independent BDMC] \label{ex-BDMC-indep} $\ $ \\[0.12cm]
Let $P$ given by (\ref{matrix-BDMC}) such that, for any $i\geq 1$, $p_i:=p$, $r_i:=r$, $q_i:=q$, with $p,q,r\in[0,1]$ such that $p+r+q=1$ and $p>q>0$. Let $r_0\in(0,1)$ and $\beta_0 := 1-q-\sqrt{pq}$. The bounds for $\varrho_{V}$ with $V(n):= (p/q)^{n/2}$ can be derived from \cite[Prop.~4.1]{HerLedJAP14}, so that (Corollary~\ref{rem-surprise}):   
%\vspace*{-3mm}
\leftmargini 1.5em   
\begin{itemize}
	\item if $r_0\in[\beta_0,1)$, then $\varrho_2 \leq r+2\sqrt{pq}$; 
  
  %\vspace*{-1mm}
	
	\item if $r_0\in(0,\beta_0]$, then : 
\begin{enumerate}[(a)]
	\item in case $\, 2p \leq \big(1-q+\sqrt{pq}\big)^2$: $\varrho_2 \leq r+2\sqrt{pq}$; 
  \item in case $\, 2p > \big(1-q+\sqrt{pq}\big)^2$, setting $\beta_1 := p - \sqrt{pq}  - \sqrt{r\big(r+2\sqrt{pq}\big)}$:
\begin{subequations}
\begin{eqnarray}
& & \varrho_2 = \big|r_0 + \frac{p(1-r_0)}{r_0-1+q}\big| \ \ \text{ when } r_0\in(0,\beta_1] \label{a0-1-L2} \\
& & \varrho_2 \leq  r+2\sqrt{pq}  \ \, \quad\qquad\text{ when } r_0\in[\beta_1,\beta_0). \label{a0-3-L2}
\end{eqnarray}
\end{subequations}
\end{enumerate}
\end{itemize}
\end{ex}

\newcommand{\gc}{\widehat\gamma}
\newcommand{\rhoc}{\widehat\rho}

\begin{rem}[Discussion on the $\ell^2(\pi)$-spectral gap and the decay parameter] \label{l2-spectral-gap} ~\newline
Let $P$ be a BDMC satisfying (\ref{cond-irred-aper}). It can be proved that the decay parameter of $P$, denoted by $\gamma$ in \cite{DooSch95} but by $\gamma_{DS}$ here to avoid confusion, equals to $\varrho_2$, that is (from reversibility):  $\gamma_{DS} =\varrho_2 = \|P-\Pi\|_2$. But note that  $\gamma_{DS}$ is only known for specific instances of BDMC from \cite{DooSch95} (see \cite{Kov10} for a recent contribution). 
For a general Markov kernel $P$, we only have (see also \cite{Pop77,Isa79})
$\gamma_{DS} \leq \varrho_2$. 
In particular, the decay parameter does not provide information on non-reversible RWs with i.d. bounded increments of Section~\ref{sec-stab-expo}.
\end{rem}
%\vspace*{-8mm}

%===========================
%==========================
\subsection{The Metropolis-Hastings Algorithm} \label{sec-MCMC}
%===========================
%==========================
%\vspace*{-3mm}
Let $\pi=(\pi(i))_{i\in\N}$ (target distribution) be a probability measure on $\N$ known up to a multiplicative constant. Let $Q:=(Q(i,j))_{(i,j)\in\N^2}$ (proposal kernel) be any transition kernel on $\N$. The associated  Metropolis-Hastings (M-H) Markov kernel $P:=(P(i,j))_{(i,j)\in\N^2}$ is defined by 

%\vspace*{-4mm}

$$
P(i,j) := \left \{
    \begin{array}{ll}
       \min\left(Q(i,j)\, ,\, \frac{\pi(j)\, Q(j,i)}{\pi(i)}\right) \quad \quad  \text{if } i\neq j \\[0.15cm]
      1 - \sum_{\ell\neq i} P(i,\ell) \quad\quad\quad \quad \quad  \ \ \text{if } i =j. 
    \end{array}
    \right.
		$$
It is well-known that $P$ is reversible with respect to $\pi$ and that $\pi$ is $P$-invariant.
%
%\vspace*{-2mm}
\begin{cor} \label{cor-MCMC}
Assume that $\pi(i)>0$ for every $i\in\N$ and that $\pi$ satisfies \emph{(\ref{pi-tail})} with $\tau\in(0,1)$. Assume that $Q$ is an aperiodic and irreducible Markov kernel on $\N$ such that for every $(i,j)\in \N^2$, $Q(i,j)=0 \Leftrightarrow Q(j,i)=0$, satisfying \emph{(\ref{ass-voisin})}  and the following condition (see \emph{(\ref{cond-lim-intro})})
\begin{equation} \label{cond-lim-MCMC}
\forall {m}=-N,\ldots,N,\quad q_{m} := \lim_{i\r+\infty} Q(i,i+{m}).
%\ \neq 0.
\end{equation}
Finally assume that $(q_k,q_{-k}) \neq (0,0)$ for some $k\in\{1,...,N\}$.
Then the associated M-H kernel $P$ satisfies \emph{(\ref{ineg-gap-gene})} and $r_{ess}(P_{|\ell^2(\pi)})\leq  \alpha_0$ with 
%\vspace*{-2mm}
\begin{equation} \label{cond-lim-rev-ter}
\alpha_0 :=  1 - \sum_{m=1}^N\big(\sqrt{p_m} - \sqrt{p_{-m}}\big)^2 \quad \text{where}\ \ 
p_k := \begin{cases}
        \min\left(q_k\, ,\, \tau^k\, q_{-k}\right) &  \text{if } k \neq 0 \\[0.15cm]
      1 - \sum_{\ell=1}^N \big(p_\ell + p_{-\ell}\big) & \text{if } k=0. 
    \end{cases}
\end{equation}
If \emph{(\ref{pi-tail})} holds with $\tau=0$, then $p_{m}=0$ for every ${m}=1,\ldots,N$, and the above conclusions holds true with $\alpha_0 :=  p_0$ provided that $p_0<1$. 
\end{cor}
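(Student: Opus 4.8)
The plan is to present $P$ as a reversible band kernel and apply Corollary~\ref{rem-surprise} verbatim. The Metropolis--Hastings kernel is reversible with respect to $\pi$ by construction, so the task reduces to checking that $P$ satisfies (\ref{ass-voisin}) and (\ref{cond-lim-intro}), to identifying the limits $a_m:=\lim_{i\r+\infty}P(i,i+m)$, and then to invoking Assertion~3 of Corollary~\ref{rem-surprise} when $\tau\in(0,1)$ and Assertion~4 when $\tau=0$ (Assumption~(\ref{pi-tail}) being part of the hypotheses).

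First I would transfer the band structure. Since $P(i,j)\le Q(i,j)$ for $i\neq j$, any vanishing entry of $Q$ forces the corresponding entry of $P$ to vanish, so (\ref{ass-voisin}) for $Q$ gives (\ref{ass-voisin}) for $P$ with the same $N$. The symmetry hypothesis $Q(i,j)=0\Leftrightarrow Q(j,i)=0$ moreover makes $P(i,j)>0$ precisely when $Q(i,j)>0$ (for $i\neq j$): indeed $Q(i,j)>0$ then yields $Q(j,i)>0$, whence $P(i,j)=\min(Q(i,j),\pi(j)Q(j,i)/\pi(i))>0$. Thus $P$ inherits the irreducibility and aperiodicity of $Q$ required by Proposition~\ref{pro-RW-SG}.

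Next I would compute the tail limits, which is the computational core. For fixed $m\neq0$ and $i$ large,
\begin{equation*}
P(i,i+m)=\min\!\left(Q(i,i+m),\ \frac{\pi(i+m)}{\pi(i)}\,Q(i+m,i)\right);
\end{equation*}
here $Q(i,i+m)\to q_m$ and $Q(i+m,i)\to q_{-m}$ by (\ref{cond-lim-MCMC}), while $\pi(i+m)/\pi(i)\to\tau^m$ by telescoping (\ref{pi-tail}). Continuity of $\min$ then gives $a_m=\min(q_m,\tau^m q_{-m})=p_m$. Summing over $m\neq0$ in $P(i,i)=1-\sum_{m\neq0}P(i,i+m)$ and using the band structure yields $a_0=1-\sum_{m=1}^N(p_m+p_{-m})=p_0$. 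Feeding these values into (\ref{cond-lim-rev-bis}) produces exactly the announced $\alpha_0=1-\sum_{m=1}^N(\sqrt{p_m}-\sqrt{p_{-m}})^2$ of (\ref{cond-lim-rev-ter}).

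It remains to secure $\alpha_0<1$, which I expect to be the delicate step. When $\tau\in(0,1)$, reversibility of $P$ gives $\pi(i)P(i,i+k)=\pi(i+k)P(i+k,i)$, hence $p_k=\tau^k p_{-k}$ in the limit, so $p_k$ and $p_{-k}$ are simultaneously zero or positive; if $p_k\neq0$ then Assertion~3 of Corollary~\ref{rem-surprise} gives $a_k\neq a_{-k}$ and $\alpha_0<1$. When $\tau=0$, the factor $\pi(i+m)/\pi(i)\to0$ forces $a_m=p_m=0$ for every $m\ge1$, exactly as in Remark~\ref{rem_fle-alpha}, so that $\alpha_0=a_0=p_0$ and Assertion~4 applies as soon as $p_0<1$. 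The one point requiring care is that, because of the $\min$, $p_k=\min(q_k,\tau^k q_{-k})>0$ demands \emph{both} $q_k>0$ and $q_{-k}>0$; the hypothesis $(q_k,q_{-k})\neq(0,0)$ must therefore be read as furnishing a $k$ for which both tail limits are positive, i.e. a genuine two-sided proposal movement surviving in the limit, which is what ultimately makes $\alpha_0$ strictly less than $1$.
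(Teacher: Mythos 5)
Your proof is correct and takes essentially the same route as the paper's (far terser) proof: transfer irreducibility, aperiodicity and the band structure from $Q$ to $P$, identify $\lim_{i\r+\infty} P(i,i+m)=p_m$, and apply Corollary~\ref{rem-surprise} (Assertion~3 for $\tau\in(0,1)$, Assertion~4 for $\tau=0$). Your closing caveat is in fact sharper than anything in the paper's own proof, which silently glosses over it: since $p_k=\min\left(q_k\,,\,\tau^k q_{-k}\right)$ and $p_{-k}=\min\left(q_{-k}\,,\,\tau^{-k}q_k\right)$ \emph{both} vanish as soon as either $q_k=0$ or $q_{-k}=0$, the hypothesis $(q_k,q_{-k})\neq(0,0)$ as literally written can leave $p_m=0$ for every $m\neq0$ and hence $\alpha_0=1$ (making the conclusion unobtainable by this method), so it must indeed be read as requiring $q_k>0$ \emph{and} $q_{-k}>0$ for some $k$, exactly as you observe.
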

%
%\vspace*{-3mm}
\begin{proof}{}
It is well-known that $P$ is irreducible and aperiodic under the basic assumptions on $Q$.  If $Q$ satisfies (\ref{ass-voisin}) for some $N$, then so is $P$ (with the same $N$). Assumption~(\ref{cond-lim-intro}) holds for $P$: $\lim_{i\r+\infty} P(i,i+{m}) = p_m$ with $p_m$ defined in (\ref{cond-lim-rev-ter}). Then apply Corollary~\ref{rem-surprise}. 
\end{proof}
%
%\vspace*{-3mm}
\begin{ex} \label{ex-MCMC-q}
Assume that $\pi$ (possibly known up to a multiplicative constant) is such that $\pi(i)>0$ for every $i\in\N$ and satisfies \emph{(\ref{pi-tail})}. Let $Q$ be a transition kernel on $\N$ satisfying 
$$Q(0,0):=r<1,\ Q(0,1):=1-r, \quad\forall i\geq 1,\quad Q(i,i-1) = q,\quad  Q(i,i) = 1-2q,\quad Q(i,i+1)=q$$
for some $q\in(0,1/2]$. 
The associated M-H Markov kernel $P^{(q)}$ is given by $P^{(q)}(0,1) =\min(1-r\, ,\, q\, \pi(1)/\pi(0))$ and 
\begin{gather*}
\forall i\geq 1, \quad P^{(q)}(i,i-1) = q \min\left(1\, ,\, \frac{\pi(i-1)}{\pi(i)}\right) \quad P^{(q)}(i,i+1) = q \min\left(1\, ,\, \frac{\pi(i+1)}{\pi(i)}\right) \\ P^{(q)}(i,i):=1- \sum_{\ell\neq i} P^{(q)}(i,\ell). 
\end{gather*}
The conditions of Corollary~\ref{cor-MCMC} are trivially satisfied. 
 Then $P^{(q)}$ 
satisfies \emph{(\ref{ineg-gap-gene})}. Next $\alpha_0 \equiv \alpha_0(q)$ in (\ref{cond-lim-rev-ter}) is 

%\vspace*{-4mm}

\begin{equation}\label{MCMC_alpha_0}
\alpha_0(q) = 1 - q\big(1 - \sqrt{\tau}\big)^2
\end{equation}
since the $p_m$'s in (\ref{cond-lim-rev-ter}) are given by $p_{-1} = q,\ p_0 = 1-q-q\tau,\ p_1 = q\tau$. When $q\in(0,1/2]$, $\alpha_0(q)$ is minimal for $q=1/2$, thus $q=1/2$ provides the minimal 
%best 
bound for $r_{ess}(P^{(q)}_{|\ell^2(\pi)})$
The relevant question is to find $q\in(0,1/2]$ providing the minimal value of $\varrho_2\equiv \varrho_2(q)$ (See Example~\ref{ex-MCMC-q-contin}). 
\end{ex}
%

%\vspace*{-6mm}

\begin{ex} [Simulation of Poisson distribution with parameter $1$] \label{ex-simu-poisson}
Let $\pi$ be the Poisson distribution with parameter $\lambda :=1$, defined by $\pi(i):=\exp(-1)/i!$. Then \emph{(\ref{pi-tail})} holds with $\tau =0$. Introduce the proposal kernel $Q$ of Example~\ref{ex-MCMC-q}  with $r:=1/2$ and $q :=1/2$. 
The associated M-H kernel $P$ is given by $P^{(q)}(0,0)=P^{(q)}(0,1)=1/2$ and 
%\vspace*{-3mm}
\begin{gather*}
\forall i \geq 1, \ P^{(q)}(i,i-1)=\frac{1}{2} \quad P^{(q)}(i,i)=\frac{i}{2(i+1)}, \quad P^{(q)}(i,i+1)=\frac{1}{2(i+1)}.
\end{gather*}
We know from Example~\ref{ex-MCMC-q} that $P^{(q)}$ satisfies \emph{(\ref{ineg-gap-gene})}  and  $r_{ess}(P^{(q)}_{|\ell^2(\pi)}) \leq \alpha_0 = 1/2$.
The rate of convergence $\varrho_2\equiv \varrho_2(q)$ of $P^{(q)}$ is studied in Example~\ref{ex-simu-poisson-cont}. 
\end{ex}
%
%\vspace*{-10mm}

%
%===========================================
%===========================================
\section{Bound for $\varrho_2$ via truncation and numerical applications} \label{cas-gap-connu}
%===========================================
%===========================================
%\vspace*{-4mm}
Let us consider the following $k$-th truncated (and augmented) matrix $P_k$ associated with $P$: 	
\[ 
\forall (i,j)\in \{0,\ldots,k-1\}^2, \quad P_k(i,j) := \begin{cases}
P(i,j) & \text{ if $\ 0\leq i\leq k-1\ $ and $\ 0\leq j \leq k-2$} \\
\sum_{\ell\ge k-1}P(i,\ell) & \text{ if $\ 0\leq i\leq k-1\ $ and $\ j=k-1$}.
\end{cases}
\]
Let $\sigma(P_k)$ denote the set of eigenvalues of $ P_k$, and define 
$\rho_k := \max\big\{|\lambda|,\, \lambda\in\sigma(P_k),\, |\lambda| <1\big\}.$

Recall that $V(i) := {\pi(i)}^{-1/2}$ and that $\varrho_V$ is defined in (\ref{def-rho-V}). The statement below follows from Proposition~\ref{pro-RW-SG} and from the weak perturbation method in \cite{HerLed14a} applied to $P_{|{\cal B}_{V}}$, for which the drift inequality (\ref{drift-V}) plays an important role. 
%
%\vspace*{-4mm}
\begin{pro} \label{th-first}
If $P$ satisfies \emph{(\ref{cond-lim-intro})}, \emph{(\ref{pi-tail})} and \emph{(\ref{MoySautBorne})}, then the following properties holds with $\alpha_0$ given in (\ref{fle-alpha}): 
%\vspace*{-5mm}
\begin{enumerate}[(a)]
	\item $\varrho_2 \leq \alpha_0\ \Longleftrightarrow \varrho_{V} \leq \alpha_0$, and in this case we have $\limsup_k\rho_k \leq \alpha_0$; 
	%\vspace*{-2mm}
	\item $\varrho_2 > \alpha_0\ \Longleftrightarrow \varrho_{V} > \alpha_0$, and in this case we have $\varrho_2 = \varrho_{V} = \lim_k\rho_k$. 
\end{enumerate}
\end{pro}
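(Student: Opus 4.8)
The plan is to separate the statement into a purely spectral layer, which comes essentially for free from Proposition~\ref{pro-RW-SG}, and the quantitative statements about the truncated spectra $\rho_k$, which require the weak perturbation machinery of \cite{HerLed14a}. Under (\ref{cond-lim-intro}), (\ref{pi-tail}) and (\ref{MoySautBorne}), Proposition~\ref{pro-RW-SG} already gives that $P$ is quasi-compact on both $\ell^2(\pi)$ and $\cB_V$ with $\max(r_{ess}(P_{|\cB_V}),r_{ess}(P_{|\ell^2(\pi)}))\leq\alpha_0$, that $\varrho_V\leq\alpha_0\Rightarrow\varrho_2\leq\alpha_0$, and that $\varrho_V>\alpha_0\Rightarrow\varrho_2=\varrho_V$. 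The forward implication of (a) is then the contrapositive of the last fact (if $\varrho_2\leq\alpha_0$ but $\varrho_V>\alpha_0$, Proposition~\ref{pro-RW-SG}(b) would force $\varrho_2=\varrho_V>\alpha_0$), its reverse implication is Proposition~\ref{pro-RW-SG}(a), and (b) is the logical negation of (a) together with the equality $\varrho_2=\varrho_V$ from Proposition~\ref{pro-RW-SG}(b). So the only genuinely new content concerns $\rho_k$.

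Next I would invoke the weak perturbation method of \cite{HerLed14a} on $\cB_V$, whose applicability rests on the drift inequality (\ref{drift-V}). Although the augmented truncations $P_k$ do not converge to $P$ in operator norm on $\cB_V$, they converge in the weaker sense required by \cite{HerLed14a}, and (\ref{drift-V}) supplies a uniform-in-$k$ Doeblin--Fortet inequality for $\{P_k\}$. From this I would extract the spectral stability statement: for every $\alpha\in(\alpha_0,1)$ and all large $k$, the eigenvalues of $P_k$ in $\{|\lambda|>\alpha\}$ are, with multiplicities, small perturbations of the eigenvalues of $P_{|\cB_V}$ in $\{|\lambda|>\alpha\}$, and no eigenvalue of $P_k$ of modulus $>\alpha$ can stay away from $\sigma(P_{|\cB_V})\cap\{|\lambda|>\alpha\}$. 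I would also use throughout that $P$ and each stochastic $P_k$ have $1$ as dominant eigenvalue, simple by irreducibility and aperiodicity.

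With this tool the two regimes are short. In case (a), $\varrho_V\leq\alpha_0$ means the only eigenvalue of $P_{|\cB_V}$ in $\{|\lambda|>\alpha\}$ (any $\alpha\in(\alpha_0,1)$) is $\lambda=1$; stability forces the $P_k$-eigenvalues of modulus $>\alpha$ to cluster at $1$ with total multiplicity $1$, so by stochasticity that cluster is the eigenvalue $1$ itself. Hence every other eigenvalue of $P_k$ has modulus $\leq\alpha$, i.e.\ $\rho_k\leq\alpha$ for large $k$, and letting $\alpha\downarrow\alpha_0$ yields $\limsup_k\rho_k\leq\alpha_0$. In case (b), $P_{|\cB_V}$ has a subdominant eigenvalue $\lambda_*\neq1$ with $|\lambda_*|=\varrho_V>\alpha_0$; fixing $\alpha\in(\alpha_0,\varrho_V)$, stability produces $P_k$-eigenvalues converging to $\lambda_*$, of modulus $<1$ for large $k$ and hence contributing to $\rho_k$, so $\liminf_k\rho_k\geq\varrho_V$, while fixing $\alpha\in(\varrho_V,1)$ shows as in case (a) that $\rho_k\leq\alpha$ for large $k$, so $\limsup_k\rho_k\leq\varrho_V$. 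Combining gives $\lim_k\rho_k=\varrho_V=\varrho_2$.

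The main obstacle is not this bookkeeping but the verification that the specific augmented truncations $P_k$ fall within the scope of \cite{HerLed14a}: one must check that $\{P_k\}$ obeys a uniform drift/Doeblin--Fortet inequality inherited from (\ref{drift-V}) and that $P_k\to P$ in the weak operator topology used there, and in particular that the augmentation of the last column preserves both stochasticity and the uniform bound, so that the limiting peripheral spectrum is genuinely that of $P_{|\cB_V}$ and not a truncation artefact.
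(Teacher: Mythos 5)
Your proof is correct and takes essentially the same route as the paper: the authors' entire argument is the remark that the statement ``follows from Proposition~\ref{pro-RW-SG} and from the weak perturbation method in \cite{HerLed14a} applied to $P_{|{\cal B}_{V}}$, for which the drift inequality (\ref{drift-V}) plays an important role,'' which is precisely your decomposition into the spectral layer (the equivalences and $\varrho_2=\varrho_V$ from Proposition~\ref{pro-RW-SG}) and the truncation layer (spectral stability of the $P_k$ via \cite{HerLed14a}). Your bookkeeping of the eigenvalue clusters (total multiplicity one near $1$, subdominant eigenvalues converging to $\lambda_*$) and your flagging of the uniform drift/weak-convergence verification for the augmented truncations are a faithful expansion of what the paper leaves implicit in that citation.
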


Below the estimation of the convergence rate $\varrho_2$ for some Metropolis-Hastings Markov kernel $P$ is derived from Proposition~\ref{th-first}. Recall that Inequality (\ref{ineg-gap-intro-rho2}) applies when $P$ is reversible. The generic procedure for the following instances of Markov kernel $P$ is as follows:
%\vspace*{-5mm}
\begin{enumerate}
	\item Compute $\alpha_0$ given in (\ref{cond-lim-rev-ter}) and choose a small $\varepsilon>0$
	%\vspace*{-3mm}
	\item $k:=2$
	%\vspace*{-3mm}
		\item \label{3}Consider the $k$-order truncated matrix  $P_k$ of the kernel $P$. 
		%\vspace*{-3mm}
		\item Compute the second highest eigenvalue $\rho_k$ of $P_k$.
		%\vspace*{-3mm}
		\item If $|\rho_k-\rho_{k-1}|> \varepsilon$ then ($k:=k+1$, return to step \ref{3})
		
		else if $\rho_k > \alpha_0$ then  $\varrho_2 \simeq \rho_k$
		else $\varrho_2 \leq \alpha_0$.
\end{enumerate}
It is clear that the control of the stabilization of the sequence $(\rho_k)_{k \geq 2}$ through the comparison between $|\rho_k-\rho_{k-1}|$ and $\varepsilon$ only provides an estimation of $\varrho_2$. 
%\vspace*{-4mm}
%==========================
\begin{ex} [Example~\ref{ex-MCMC-q} continued] \label{ex-MCMC-q-contin} $\ $ \\
Let us consider the probability distribution $\pi$ given by $\pi(i) := C \, (i+1)\,\tau^i$ for $n\in\N$ where $C$ is a (possibly unknown) normalisation constant and $0<\tau <1$. Then \emph{(\ref{pi-tail})} is satisfied. If we choose an RW as in Example~\ref{ex-MCMC-q} for the proposal kernel, the  associated M-H kernel $P^{(q)}$ is defined by $P^{(q)}(0,1) =\min\left(1-r\,, 2\, q\, \tau\right)$ and 
%\vspace*{-2mm}
\begin{gather*}
\forall i\geq 1, \quad P^{(q)}(i,i-1) = q \min\left(1\, ,\, \frac{1}{\tau} \, \frac{i}{i+1}\right) \quad P^{(q)}(i,i+1) = q \min\left(1\, ,\,\tau\, \frac{i+2}{i+1}\right)   \\
    P^{(q)}(i,i):=1- \sum_{\ell\neq i} P^{(q)}(i,\ell). 
\end{gather*}
For $q\in(0,1/2]$, $P^{(q)}$  satisfies \emph{(\ref{ineg-gap-gene})}  with  $r_{ess}(P^{(q)}_{|\ell^2(\pi)}) \leq \alpha_0(q) = 1 - q\big(1 - \sqrt{\tau}\big)^2$ (see (\ref{MCMC_alpha_0})). Table~\ref{Table_tau=3/5} based on Proposition~\ref{th-first} gives the estimate of $\varrho_2(q)$ of $P^{(q)}$. 

\begin{table}%[htb]
\begin{center}
\setlength{\extrarowheight}{2pt}
\begin{tabular}{|c||c|c|c||c|c|c|}\cline{2-7}
\multicolumn{1}{c|}{} & \multicolumn{3}{c||}{$\boldsymbol{\tau=0.2}$} & \multicolumn{3}{c|}{$\boldsymbol{\tau=0.5}$} \\\hline
 $\boldsymbol{q}$ &  $\boldsymbol{\alpha_0(q,\tau)}$ & $\boldsymbol{\rho_{k}(q)}$  & $\boldsymbol{\varrho_2(q)}$ & $\boldsymbol{\alpha_0(q,\tau)}$ & $\boldsymbol{\rho_{k}(q)}$  & $\boldsymbol{\varrho_2(q)}$ \\\hline\hline
0.1 &  0.9694 & $\rho_{27}\simeq 0.9710 $   & $\simeq 0.9710 $& 0.9914 &  $\rho_{39}\simeq 0.9921$ & $\simeq 0.9921$  \\\hline
0.2  & 0.9389 & $\rho_{30}\simeq 0.9421 $   &  $\simeq 0.9421 $& 0.9828 &  $\rho_{44}\simeq 0.9842$ & $\simeq 0.9842$ \\\hline
0.3 &  0.9083 & $\rho_{31}\simeq 0.9131 $  & $\simeq 0.9131$& 0.9743 & $\rho_{47}\simeq 0.9763$ & $\simeq 0.9763$ \\\hline
0.4 & 0.8778 & $\rho_{32}\simeq 0.8842 $ & $\simeq 0.8842$ & 0.9657 & $\rho_{50} \simeq 0.9684$ & $\simeq 0.9684$  \\\hline
0.5  & 0.8472 & $\rho_{33}\simeq 0.8552$ & $\simeq 0.8552$ & 0.9571 & $\rho_{51}\simeq  0.9605$ & $\simeq 0.9605$ \\\hline
\end{tabular}
\begin{tabular}{|c||c|c|c||c|c|c|}\cline{2-7}
\multicolumn{1}{c|}{} & \multicolumn{3}{c|}{$\boldsymbol{\tau=0.6}$}  & \multicolumn{3}{c|}{$\boldsymbol{\tau=0.8}$}  \\\hline
 $\boldsymbol{q}$ &  $\boldsymbol{\alpha_0(q,\tau)}$ & $\boldsymbol{\rho_{k}(q)}$  & $\boldsymbol{\varrho_2(q)}$ & $\boldsymbol{\alpha_0(q,\tau)}$ & $\boldsymbol{\rho_{k}(q)}$  & $\boldsymbol{\varrho_2(q)}$\\\hline\hline
0.1 & 0.9949 &  $\rho_{44}\simeq 0.9953$ & $\simeq 0.9953$ & 0.99889 & $\rho_{55} \simeq 0.99883$& $\leq 0.99889$ \\\hline
0.2 & 0.9898 &  $\rho_{51}\simeq 0.9906$ & $\simeq 0.9906$ & 0.99777 & $\rho_{66}\simeq  0.99781$ & $\simeq  0.99781$ \\\hline
0.3 & 0.9848 & $\rho_{55}\simeq 0.9860$ & $\simeq 0.9860$ & 0.99666 & $\rho_{73}\simeq 0.9968$ & $\simeq 0.9968$\\\hline
0.4 & 0.9797 & $\rho_{58} \simeq 0.9814$ & $\simeq 0.9814$ & 0.99554 & $\rho_{79} \simeq 0.99579$ & $\simeq 0.99579$\\\hline
0.5 & 0.9746 & $\rho_{60}\simeq  0.9767$ & $\simeq 0.9767$ & 0.99443 & $\rho_{83} \simeq 0.9948$ & $\simeq 0.9948$\\\hline
\end{tabular}
\caption{Results for different values of $\tau$ with $\varepsilon=10^{-5}$. The second eigenvalue $\rho_{k}\equiv\rho_{k}(q)$ of $P_{k}\equiv {P^{(q)}}_{k}$ is obtained from the observed empirical stabilization of $\rho_k$ with respect to $k$.}
\label{Table_tau=3/5}
\end{center}
\end{table}
\end{ex}

%\vspace*{-7mm}

\begin{ex} [Example~\ref{ex-simu-poisson} continued] \label{ex-simu-poisson-cont}
Table~\ref{Table_Poisson} based on Proposition~\ref{th-first} gives the estimation of $\varrho_2(q)$ of the M-H $P^{(q)}$ used in the simulation of the Poisson distribution of Example~\ref{ex-simu-poisson}. Note that $q:=1/2$ gives the smallest value of $r_{ess}(P^{(q)}_{|\ell^2(\pi)})=\alpha_0(q)=0.5$, with $\alpha_0$ given by (\ref{MCMC_alpha_0}). However $q:=1/2$ does not provide the minimal rate of convergence in $\ell^2(\pi)$-norm (or in $\cB_{V}$-norm). More precisely,  for $q=1/2$, the kernel $P^{(q)}$ admits some eigenvalues in the annulus $\Gamma:=\{\lambda\in\R : 0.5 < |\lambda| < 1\}$, among which $\varrho_2(q)\approx 0.8090$ is the larger one in absolute value. Actually the minimal rate of convergence  is achieved at $q\approx 0.38$ and note that every value $0.2\leq q< 0.5$ in Table~\ref{Table_Poisson} provides a minimal rate than for $q:=1/2$. It could be conjectured from numerical evidence that for $q\le q_0$ with $q_0\approx 0.35$, $\varrho_2=\alpha_0(q)$. 
\begin{table}%[tb]
\begin{center}
\setlength{\extrarowheight}{2pt}
\begin{tabular}{|c|c|c|c|}\hline
$\boldsymbol{q}$ &  $\boldsymbol{\alpha_0(q)\equiv r_{ess}(P^{(q)})}$ & $\boldsymbol{\rho_{k}(q)}$  & $\boldsymbol{\varrho_2(q)}$ \\\hline\hline
0.1 & 0.9 &  $\rho_{37} \simeq 0.9003$ & $\simeq 0.9003$ \\\hline
0.2 & 0.8 &  $\rho_{83} \simeq 0.8008$ & $\simeq 0.8008$ \\\hline
0.3 & 0.7 & $\rho_{151} \simeq 0.7015$ & $\simeq 0.7015$\\\hline
%0.37 & 0.63 & $\rho_{77} \simeq 0.6370$ & $\simeq 0.637$ \\\hline
0.38 & 0.62 & $\rho_{61} \simeq 0.6301$ & $\simeq 0.6301$ \\\hline
0.4 & 0.6 & $\rho_{17} \simeq 0.6568$ & $\simeq 0.6568$ \\\hline
0.5 & 0.5 & $\rho_{14} \simeq 0.8090$ & $\simeq 0.8090$ \\\hline
\end{tabular}
\caption{$\rho_{k}\equiv\rho_k(q)$ is obtained from the observed empirical stabilization of $\rho_k$ with $\varepsilon=10^{-5}$. }
\label{Table_Poisson}
\end{center}
\end{table}
\end{ex}

\newpage

\bibliographystyle{alpha}

\end{document}